\documentclass[preprint, 12pt, final]{elsarticle}



\usepackage{epsfig}
\usepackage{subfig}
\usepackage{graphicx}

\usepackage[hidelinks]{hyperref}
\usepackage{url}

\usepackage{amssymb}
\usepackage{amsthm}
\usepackage{amsmath}

\theoremstyle{plain}
\newtheorem{theorem}{Theorem}[section]

\theoremstyle{definition}

\usepackage{booktabs} 
\usepackage{colortbl}
\usepackage[table]{xcolor}

\usepackage{tcolorbox}

\usepackage{lineno,hyperref}

\usepackage[toc,page,titletoc]{appendix}

\journal{Mathematical Medicine and Biology}

\begin{document}

\begin{frontmatter}

\title{Impact of a cost functional on the optimal control and the cost-effectiveness: control of a spreading infection as a case study}


\author[address1]{Fernando Salda\~na}

\author[address2]{Jos\'e Ariel Camacho-Guti\'errez}
\ead{ariel.camacho@uabc.edu.mx}

\author[address3]{Andrei Korobeinikov}

\address[address1]{Instituto de Matem\'aticas, Campus Juriquilla, 76230, Universidad Nacional Aut\'onoma de M\'exico, Qu\'eretaro, Mexico}
\address[address2]{Facultad de Ciencias, Universidad Aut\'onoma de Baja California, 22860 Baja California, Mexico}
\address[address3]{School of Mathematics and Information Science, Shaanxi Normal University, Xi\'an, China}

\begin{abstract}
In applications of the optimal control theory to problems in medicine and biology, the dependency of the objective functional on the control itself is often a matter of controversy. In this paper, we explore the impact of the dependency using reasonably simple \emph{SIR} and \emph{SEIRS} epidemic models. To qualitatively compare the outcomes for different objective functionals, we apply the cost-effectiveness analysis. Our result shows that, at least for the comparatively inexpensive controls, the variation of the power at the controls in a biologically feasible range does not significantly affect the forms of the optimal controls and the corresponding optimal state solutions. Moreover, the costs and effectiveness are affected even less. At the same time, the dependency of the cost on the state variables can be very significant.  
\end{abstract}

\begin{keyword}
SIR model\sep Optimal Control\sep Cost-Effectiveness Analysis\sep Cost Functional\sep Infectious disease modeling
\end{keyword}

\end{frontmatter}

\section{Introduction}
Application of the optimal control theory to control problems that arise in medicine and biology can potentially bring significant benefits, such as reduction of the cost and side effects of an applied intervention policy. However, so far, the amount of publications addressing such problems is rather insufficient and, what is more important, the existing work attracted a lot less attention to the biologists and medical scientists that they deserved. To a certain extent, this can be explained by the lack of synergy between the disciplines and the difficulty of understanding a complicated mathematical technique. Nevertheless, the authors believe that there is another and, probably, a more serious reason for the lack of enthusiasm towards the results of the optimal control theory. The problem is that the results are usually obtained for a very specific objective functional, and the extents to which these particular results can be trusted, if this functional is alerted, are generally unclear. In mathematical terms, it is a problem of robustness and reliability of the results with respect to perturbations of the objective functional. While the mathematical theory of robustness for the optimal control problems is a direction of intensive research, it is far from completion.  Yet, the relevance of the real-life applications urges to explore this problem using simpler approaches, such as direct computations. 

A particular issue that is a very common source of constant controversy in biomedical applications is a form of the dependency of the cost function on the considered controls (for example, see discussion and the bibliography in \citep{sharomi2017optimal,diCosts}). The problem is that, in the majority of biomedical applications, the actual form of the costs and the dependency on these on the controls are usually unclear and can hardly be defined with a necessary degree of accuracy. Moreover, the functional dependency can be different under different circumstances. As a result, for the sake of simplicity, in many publications in the literature, the cost is postulated to be equal to a sum of the weighted squares of the considered controls. (Such functionals are typically referred to as the $L_{2}$-type functionals.) Such a formulation is the most mathematically convenient, as for such $L_2$-type functionals, the optimal controls can be obtained as explicit functions of the state and adjoint variables by virtue of the Pontryagin's maximum principle \citep{saldana2019optimal}. Then, the optimal control problem can be straightforwardly reduced to a two-point boundary value problem, which is suitable for numerical solving. Thus, the main advantage of cost functionals with a quadratic term in the control is its mathematical simplicity. However, in biomedical applications, the interpretation of such $L_2$-type functionals is not always clear and their use is often difficult to justify.  
(In engineering, where this type of objective functional came from, the square of the controls means the energy spent on the control action~\citep{grigorieva2018optimal}.) 

The problem was properly realized and other forms of dependency were also considered. For instance, Ledzewicz et al.~\citep{ledzewicz2004comparison} argued that the $L_1$-type functionals, that is, the cost functionals with a linear dependence on the control, may capture the cost of an intervention policy more accurately than the $L_2$-type functionals. 
Di Liddo~\citep{diCosts} remarked that, if $u(t)$ is the fraction of a population $N(t)$ treated by some drug and $p$ is the unitary cost of the drugs, then a natural way to define the cost $C$ of the treatment is $ C=pu(t)N(t)$. 
Moreover, as an ultimate way to exclude the controversy of the intervention cost, Grigorieva et al.~\citep{grigorieva2020,grigorieva2016optimal,grigorieva2014optimal,grigorieva2013,grigorieva2012optimal} suggested to completely disregard the cost of the intervention under the assumption that it is ''tolerable'' and small compared to the losses that can be inflicted otherwise. However, one has to take into consideration that the optimal control with the $L_1$-type functionals (and in particular with the cost depending on the state variables) and the functionals independent of the controls are generally less amenable to the mathematical analysis, and usually involves singular and bang-bang controls. Moreover, the linear dependency on the control might be also not the most realistic option, as some form of the nonlinearity of the cost is likely to occur, for example, due to the overload of the health service. 

These considerations indicate that further investigation is needed in order to understand how the objective functional affects the solutions of optimal control problems, knowledge that can lead to the construction of practical disease management strategies. Moreover, such a study is essential to verify the validity and practicality of a large corpus of the results obtained for $L_1$- and $L_{2}$- types objective functionals. Another similar related problem is a form of the functional dependency of the cost on the state variables and the impact of this on the optimal control. However, despite the apparent practical relevance of this problem, very little mathematical work was so far done in this direction: so far, the authors were able to find only two publications~\citep{ledzewicz2004comparison,ledzewicz2020comparison} that address this issue.  

In this paper, we are addressing this issue considering the problem of the control of spreading infectious disease as a case study.  We consider this problem for two classical epidemic models, namely a $SIR$ model and a more complex $SEIRS$ model. For these models, we formulate four optimal control problems that correspond to four objective functionals that are most frequent in the literature on disease control, and then analyze the outcomes. We have to point out that quantitative analysis of the results for different objective functionals is a nontrivial problem by itself: in this paper, in order to quantitatively compare the outcomes for different cost functions, we employ the cost-effectiveness analysis. (The cost-effectiveness analysis was successfully applied to the epidemiological problem by a number of authors; for instance, see \citep{agusto2019optimal, biswas2017, grigorieva2020optimal, muennig2016, okosun2013optimal, rodrigues2014cost, silva2013optimal} and bibliography therein.) The main goal of this study is (i) to investigate how a specific form of the objective functional impacts the resulting optimal control, and (ii) to identify if there is a type of objective for which the optimal control solutions have better performance in terms of cost-efficacy to assess the advantages or disadvantages of selecting one functional over the others. 

The rest of the paper is structured as follows. In the next section, we study the solutions of an optimal control problem for a classical $SIR$ model with a single time-dependent control and  in section \ref{sec:SIRsimulations} we conduct a numerical study of the optimal control solutions. In section \ref{sec:CEAsinglecontrol}, we carry out a cost-effectiveness analysis to investigate the cost and health benefits of the control interventions resulting from the solution of the optimal control problem for the $SIR$ model. In section \ref{sec:OCseirs}, we construct a $SEIRS$ model with two time-dependent controls and formulate an optimal control problem. Section \ref{sec:CEAseirs} contains a cost-effectiveness analysis based on the incremental cost-effectiveness ratio (ICER) to measure the performance of the control interventions for the $SEIRS$ model. Finally, a detailed discussion of the results is presented in section \ref{sec:discussion}.

\section{Optimal control for a $SIR$ model with a single control}\label{sec:OCsinglecontrol}
As a case study, let us consider the classical \emph{SIR} epidemic model with vital dynamics given by the following initial value problem:
\begin{equation}
\begin{aligned}
\dfrac{dS}{dt}&=\mu N -\beta \dfrac{SI}{N}-\mu S,\\
\dfrac{dI}{dt}&=\beta \dfrac{SI}{N}-\gamma I -\mu I,\\
\dfrac{dR}{dt}&=\gamma I -\mu R,\label{SIRmodel}
\end{aligned}
\end{equation}
with non-negative initial conditions
\begin{equation}
S(0)=S_{0}\geq 0, \quad I(0)=I_{0}\geq 0, \quad R(0)=R_{0}\geq 0. 
\end{equation}
Here, $S(t)$ is the number of susceptibles at time $t$, $I(t)$ the number of infectious, and $R(t)$ the number of recovered individuals. The total population $N(t)$ is the sum of the sizes of these three classes, namely, $N(t)=S(t)+I(t)+R(t)$.
In system \eqref{SIRmodel}, the parameters $\beta$ and $\gamma$ represent the transmission and the recovery rates, respectively. Individuals are recruited into the population as susceptibles at a rate $\mu N$. The natural death of all individuals, whatever their status, occurs also at a per capita rate $\mu$. Thus, the deaths balance the births, so $N(t)$ is constant and equal to $N_{0}=N(0)$.\par 
Our main interest here is to identify the optimal healthcare intervention for curtailing the spread of the infection at the minimal cost. Therefore, we modify the classical \emph{SIR} model \eqref{SIRmodel} to include a time-dependent control function $u(t)$ that represents the treatment of the infectious, and can be considered as the per capita treatment rate of infected individuals. The \emph{SIR} model with control becomes
\begin{equation}
\begin{aligned}
\dfrac{dS}{dt}&=\mu N -\beta \dfrac{SI}{N}-\mu S,\\
\dfrac{dI}{dt}&=\beta \dfrac{SI}{N}-\gamma I -u(t)I -\mu I,\\
\dfrac{dR}{dt}&=\gamma I + u(t) I -\mu R.\label{ControlledSIR}
\end{aligned}
\end{equation}\par 
Public health authorities wish to minimize both the cost of infection and the cost of implementing the control during a time interval $[0,t_{f}]$. Therefore, we consider the following objective functional:
\begin{equation}
J_{i}(u(t))=\int_{0}^{t_{f}}\varphi(I(t))+\sigma_{i}(u(t),I(t))dt.\label{CostFunctional}
\end{equation}
Here, function $\varphi(I)$ represents the cost of the reduction in health and well-being, that is, the cost of infection. These costs related to pain and suffering (quality of life) are sometimes referred to as morbidity costs \citep{muennig2016}. On the other hand, the function $\sigma_{i}(u,I)$ represents the costs of healthcare services, that is, the costs of vaccines, treatment, hospitalizations and other resources.\par 
For simplicity, we assume that the cost of infection is proportional to the size of the infected class, thus
\begin{equation}
\varphi\left( I(t) \right) = A_{1} I(t).\label{C1}
\end{equation}
The above simplification allows us to focus on the impact of the cost function $\sigma_{i}(u,I)$ in the form of the optimal control solution. Following the ideas introduced in \citep{diCosts}, we consider the following forms for the cost function $\sigma_{i}(u,I)$:
\begin{eqnarray}
\sigma_{1}(u,I)&=&A_{2}u^{2}, \quad \text{quadratic state independent cost (QSI)},\label{c1}\\
\sigma_{2}(u,I)&=&A_{2}u^{2}I, \quad \text{quadratic state dependent cost (QSD)}\label{c2}.\\
\sigma_{3}(u,I)&=&A_{2}u, \quad \text{linear state independent cost (LSI)},\label{c3}\\
\sigma_{4}(u,I)&=&A_{2}uI, \quad \text{linear state dependent cost (LSD)}.\label{c4}
\end{eqnarray}
The coefficients $A_{i}>0$, $i=1,2$ are positive weights on the costs. In real-life situations, the monetary costs and side effects of a healthcare intervention or therapy are commonly small compared with the potential losses that an epidemic can inflict. Therefore, we assume $A_{1}>A_{2}$. In particular, for later numerical simulations, we will take $A_{1}=100$ and $A_{2}=10$. We have to remark that, in the opposite case, the monetary costs of the intervention usually limit in excess the use of a therapy even to the point of making the policy ``not treat any infected people at all'' the most cost-effective strategy \citep{diCosts}.
We have to find an optimal control $u^{*}(t)$ such that 
\begin{equation}
J_{i}(u^{*}(t))=\min_{u\in \mathcal{U}}\left\lbrace J_{i}(u(t))\right\rbrace \;
\text{subject to system}\; \eqref{ControlledSIR}\label{OCproblem}
\end{equation}
where the set of admissible controls is
\begin{equation}
\mathcal{U}=\left\lbrace u:[0,t_{f}]\rightarrow [0,1]\; \text{is Lebesgue measureble}\right\rbrace .\label{ControlSet}
\end{equation}
Let us introduce the vector of states $x=(S,I,R)^{T}$. We call a pair of states and control $(x,u)$ satisfying both \eqref{ControlledSIR} and $u\in \mathcal{U}$ a feasible pair. Theorem $4.1$ in \citep[Chapter III]{Fleming1975} ensures the existence of an optimal control $u^{*}(t)$ and corresponding optimal solution $x^{*}=(S^{*},I^{*},R^{*})^{T}$ for each of our control problems $\lbrace QSI, QSD, LSI, LSD\rbrace$ (the details are given in the Supplementary material \ref{AppendixExistence}).\par 
The Pontryagin's maximum principle states that it is necessary for any optimal control $u^{*}(t)$ along with the optimal state trajectory $x^{*}(t)=(S^{*},I^{*},R^{*})^{T}$ to solve the so-called Hamiltonian system. In other words, this principle converts the optimal control problem \eqref{OCproblem} into a problem of minimizing pointwise a Hamiltonian $H$ with respect to the control $u(t)$. The Hamiltonian is defined in terms of the integrand of the cost functional $J$ and the right-hand side of the controlled \emph{SIR} model \eqref{ControlledSIR} as follows:
\begin{equation}
\begin{aligned}
H_{i} &= A_{1}I + \sigma_{i}(u,I) 
  + \lambda_{S}\left( \mu N -\beta \dfrac{SI}{N}-\mu S\right)\\
  &+ \lambda_{I}\left(\beta \dfrac{SI}{N}-\gamma I -u(t)I -\mu I\right) 
  + \lambda_{R}\left(\gamma I + u(t) I -\mu R\right) ,
\end{aligned}
\end{equation}
where $\lambda_{S}$, $\lambda_{I}$ and $\lambda_{R}$ are the associated adjoint variables.\par 
We apply the Pontryagin's maximum principle to derive the necessary conditions that every optimal control satisfies.

\begin{theorem}\label{theorem:pont}
Let $u^{*}(t)$ and $x^{*}(t)$  be the optimal control and the corresponding optimal solutions for the optimal control problem \eqref{ControlledSIR}, \eqref{OCproblem}. Then there exists a piecewise differentialble adjoint variable $\lambda(t)$ such that
\begin{equation}
H_{i}(t,x^{*},u(t),\lambda(t))\geq H_{i}(t,x^{*}(t),u^{*}(t),\lambda(t))
\end{equation}
holds at any $t\in[0,t_{f}]$ for all controls $u\in \mathcal{U}$.\par 
Moreover, except at the points of discontinuities of $u^{*}(t)$, the adjoint variable $\lambda=(\lambda_{S},\lambda_{I},\lambda_{R})^{T}$ satisfies equations 
\begin{equation}
\dfrac{d\lambda_{j}}{dt}=-\dfrac{\partial H_{i}}{\partial x_j}(t,x^{*},u^{*},\lambda),\quad j=S,I,R.
\end{equation}
Therefore,
\begin{equation}
\begin{aligned}
\dfrac{d\lambda_{S}}{dt}&=\lambda_{S}\left( \beta\dfrac{I^{*}}{N}+\mu\right) -\lambda_{I}\beta\dfrac{I^{*}}{N},\\
\dfrac{d\lambda_{I}}{dt}&=-A_{1}- \dfrac{\partial \sigma_{i}}{\partial I}(u^{*},I^{*})+ \lambda_{S}\beta\dfrac{S^{*}}{N}- \lambda_{I}\left(\beta\dfrac{S^{*}}{N}-\gamma-u^{*}-\mu\right) - \lambda_{R}(\gamma+u^{*}),\\ 
\dfrac{d\lambda_{R}}{dt}&=\mu \lambda_{R},\label{AdjointSystem}
\end{aligned}
\end{equation}
with transversality conditions 
\begin{equation}
\lambda_{j}(t_{f})=0,\quad j=S,I,R.\label{transversality}
\end{equation}
The optimality condition is given as
\begin{equation}
\dfrac{\partial H_{i}}{\partial u}=\dfrac{\partial \sigma_{i}}{\partial u}(u,I^{*})-\lambda_{I}I^{*}+\lambda_{R}I^{*}=0.
\end{equation}
\end{theorem}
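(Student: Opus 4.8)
The plan is to obtain all of the stated conditions as a direct consequence of Pontryagin's maximum principle (PMP) applied to the minimization problem \eqref{OCproblem}. First I would verify that the hypotheses of PMP are met: the right-hand side of the controlled system \eqref{ControlledSIR} and the integrand $A_{1}I+\sigma_{i}(u,I)$ of the functional \eqref{CostFunctional} are continuously differentiable in the state $x=(S,I,R)^{T}$ and in the control $u$ on the relevant domain (here one uses that $N$ is constant, so $\beta SI/N$ is smooth), while the admissible set $\mathcal{U}$ consists of measurable controls valued in the compact set $[0,1]$. Since the existence of an optimal pair $(x^{*},u^{*})$ is already guaranteed by Theorem~4.1 of \citep[Chapter III]{Fleming1975}, PMP then asserts the existence of a piecewise differentiable adjoint vector $\lambda=(\lambda_{S},\lambda_{I},\lambda_{R})^{T}$ for which the Hamiltonian $H_{i}$ is minimized pointwise at $u^{*}$, that is $H_{i}(t,x^{*},u,\lambda)\geq H_{i}(t,x^{*},u^{*},\lambda)$ for every $u\in\mathcal{U}$ and almost every $t\in[0,t_{f}]$, which is precisely the first displayed inequality.

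Next I would derive the adjoint system by computing $\partial H_{i}/\partial x_{j}$ for $j=S,I,R$ and invoking $d\lambda_{j}/dt=-\partial H_{i}/\partial x_{j}$ at the points of continuity of $u^{*}$. These are routine partial derivatives: differentiating $H_{i}$ in $S$ produces the terms $\lambda_{S}(\beta I^{*}/N+\mu)-\lambda_{I}\beta I^{*}/N$; differentiating in $I$ produces the $-A_{1}-\partial\sigma_{i}/\partial I$ contribution from the integrand together with the transmission, recovery, treatment and mortality terms, giving the stated $d\lambda_{I}/dt$; and since $R$ enters $H_{i}$ only through the single term $-\mu\lambda_{R}R$, one obtains $d\lambda_{R}/dt=\mu\lambda_{R}$. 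Care is needed only in keeping track of signs and in carrying the $\partial\sigma_{i}/\partial I$ term so that the four cost choices \eqref{c1}--\eqref{c4} are all captured by one formula.

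The transversality conditions $\lambda_{j}(t_{f})=0$ then follow because the terminal state is free and the objective \eqref{CostFunctional} carries no terminal payoff term, so PMP forces the costates to vanish at $t_{f}$. Finally, the stated optimality condition is the interior first-order stationarity relation: where the pointwise minimizer lies in the open interval $(0,1)$, setting $\partial H_{i}/\partial u=0$ yields $\partial\sigma_{i}/\partial u(u,I^{*})-\lambda_{I}I^{*}+\lambda_{R}I^{*}=0$.

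I expect the main subtlety to lie precisely here. For the quadratic cost functionals (QSI, QSD) the Hamiltonian is strictly convex in $u$, so this equation determines $u^{*}$ explicitly after projection onto $[0,1]$; but for the linear functionals (LSI, LSD) the Hamiltonian is affine in $u$, whence $\partial H_{i}/\partial u$ is a switching function and the equation $\partial H_{i}/\partial u=0$ characterizes only the singular arcs, the minimization otherwise being attained at the boundary of $[0,1]$ (bang-bang control). Thus the optimality condition as stated should be read as the stationarity relation valid wherever the minimum is interior, with the genuine content of PMP in the linear cases being the minimization inequality rather than the vanishing of the derivative.
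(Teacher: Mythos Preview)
Your proposal is correct and follows exactly the route the paper takes: the theorem is stated as a direct application of Pontryagin's maximum principle, with the adjoint equations obtained by computing $-\partial H_i/\partial x_j$, the transversality conditions coming from the free terminal state with no terminal cost, and the optimality condition from $\partial H_i/\partial u=0$. In fact, your discussion is more careful than the paper's own treatment: the paper simply records the stationarity relation $\partial H_i/\partial u=0$ for all four costs and then, in the surrounding text and appendix, only works out the explicit characterizations for the quadratic cases QSI and QSD, remarking that the linear cases are ``less amenable to mathematical analysis'' and handled numerically; your observation that for LSI and LSD the Hamiltonian is affine in $u$, so $\partial H_i/\partial u=0$ describes only singular arcs while the true PMP content is the minimization inequality (bang-bang otherwise), is exactly the caveat the paper leaves implicit.
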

For the quadratic cost functions $\sigma_{i}(u,I)$ ($i=1,2$), using Theorem~\ref{theorem:pont}, we can straightforwardly obtain the optimality system which consists of the controlled \emph{SIR} model, the system \eqref{AdjointSystem} for the adjoint variables with transversality conditions \eqref{transversality}, and the characterization of the optimal control (for the details, see the supplementary material \ref{AppendixOptimalitySystem}). The optimality system allows us to obtain a numerical approximation of the optimal control for each of this two cases. For the linear cost functions $\sigma_{i}(u,I)$ ($i=3,4$), the objective functional is a $L_{1}-$type functional less amenable to mathematical analysis; therefore, for simplicity, in this case we only perform a numerical analysis of the optimal controls. The numerical approximations will allow us to perform the cost-effectiveness analysis.

\section{Numerical results for the SIR model}\label{sec:SIRsimulations}
We conduct a numerical study of the optimal control problems using BOCOP 2.2.1 software package \citep{bocop}. In BOCOP, the optimal control problem is approximated by a finite dimensional optimization problem. This is done by a discretization in time applied to the state and control variables, as well as the dynamics equation. These methods are usually less precise than indirect methods based on the Pontryagin's maximum principle, but more robust with respect to the initialization.\par 
\begin{figure}[hbtp]
\vspace*{-70pt}
 \centering
  \subfloat[]{
    \includegraphics[width=0.4\textwidth]{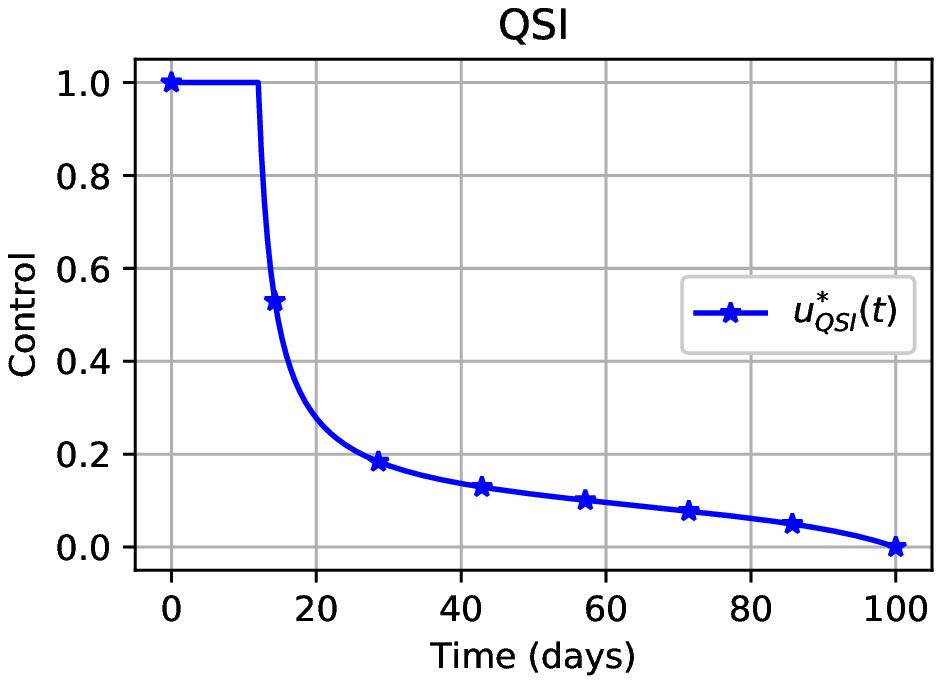}}
  \subfloat[]{
    \includegraphics[width=0.4\textwidth]{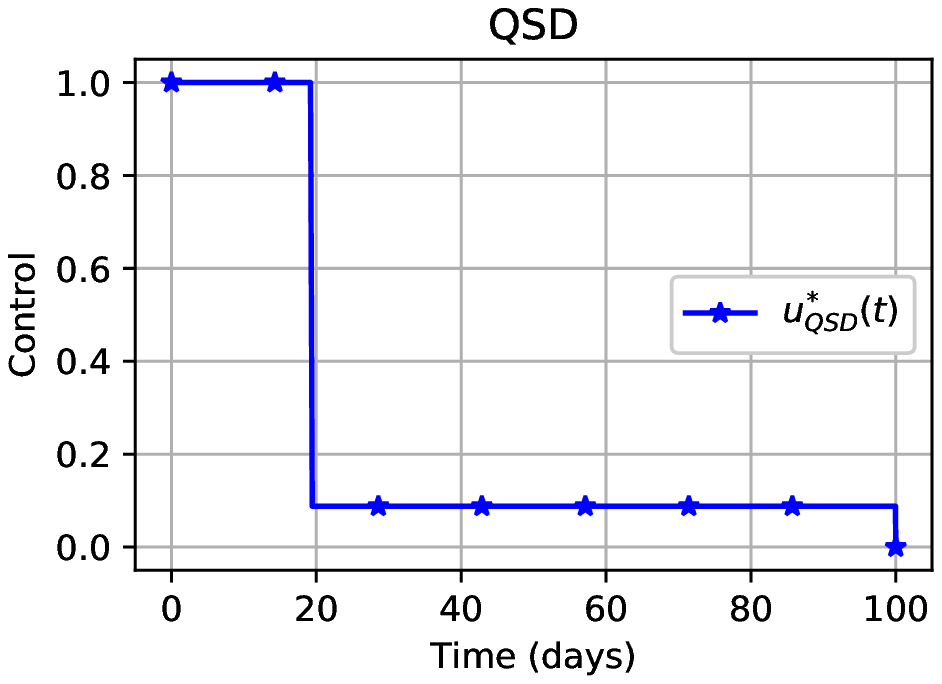}}\\
  \subfloat[]{
    \includegraphics[width=0.4\textwidth]{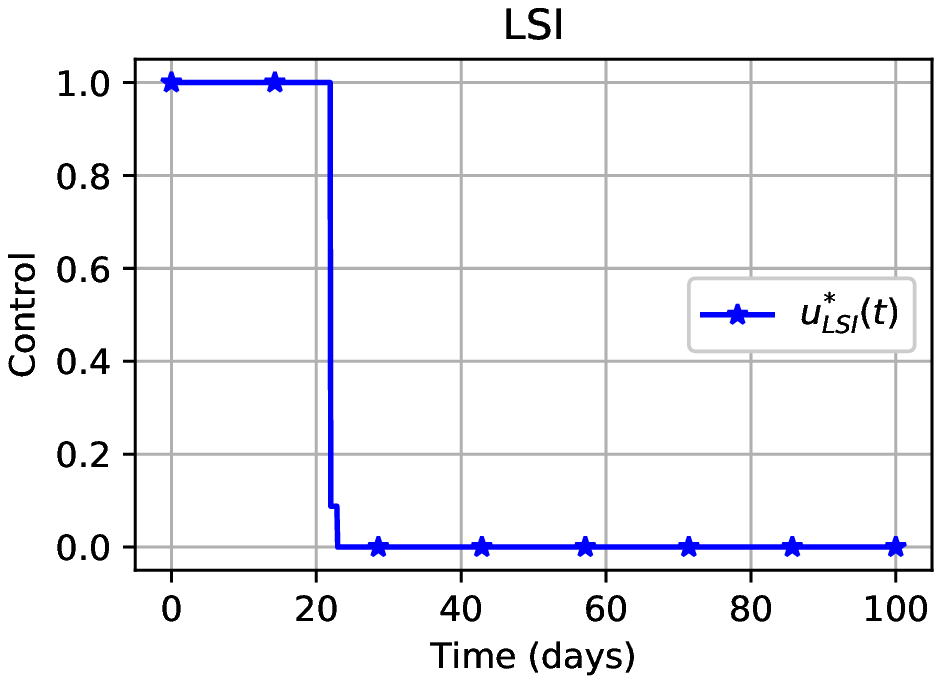}}
  \subfloat[]{
    \includegraphics[width=0.4\textwidth]{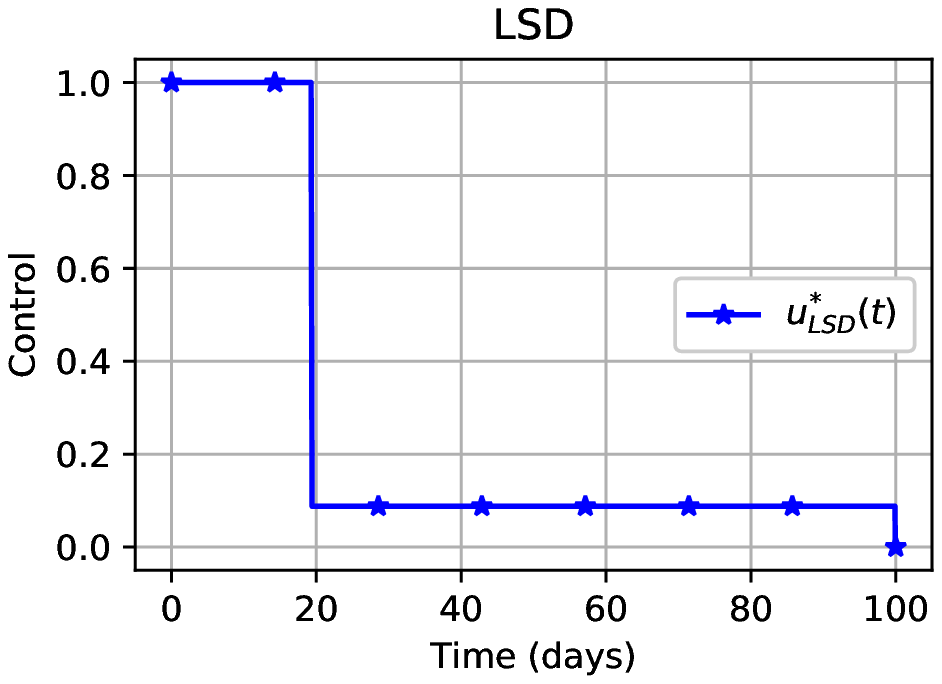}}
 \caption{Associated optimal control of the controlled SIR model \eqref{ControlledSIR} for the QSI  (a),  QSD  (b),  LSI  (c) and  LSD  (d) cases, respectively.}
 \label{fig:controlsSIR}
 
   \subfloat[]{
    \includegraphics[width=0.4\textwidth]{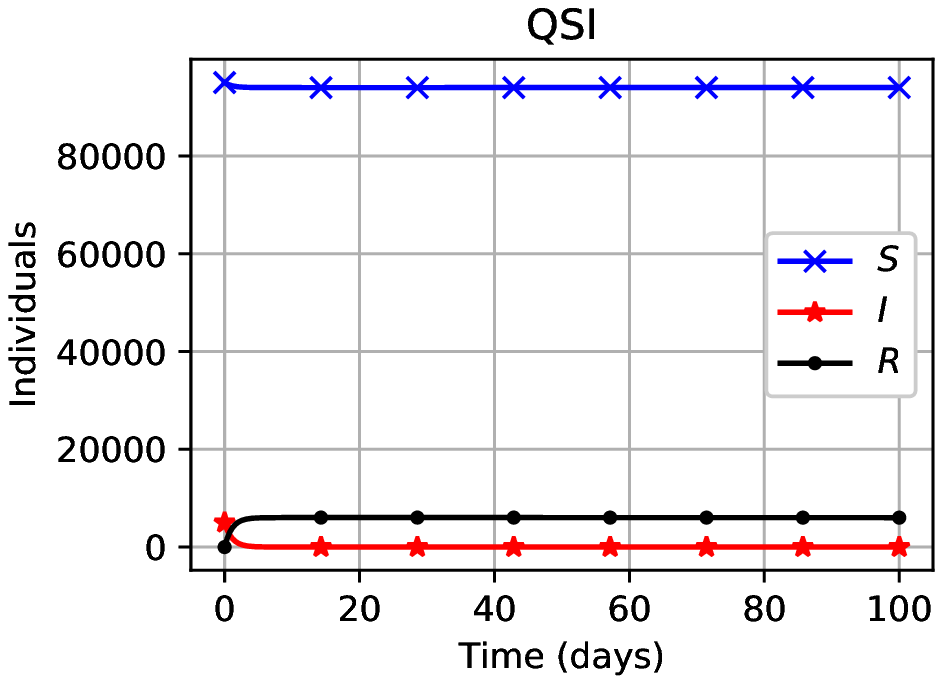}}
  \subfloat[]{
    \includegraphics[width=0.4\textwidth]{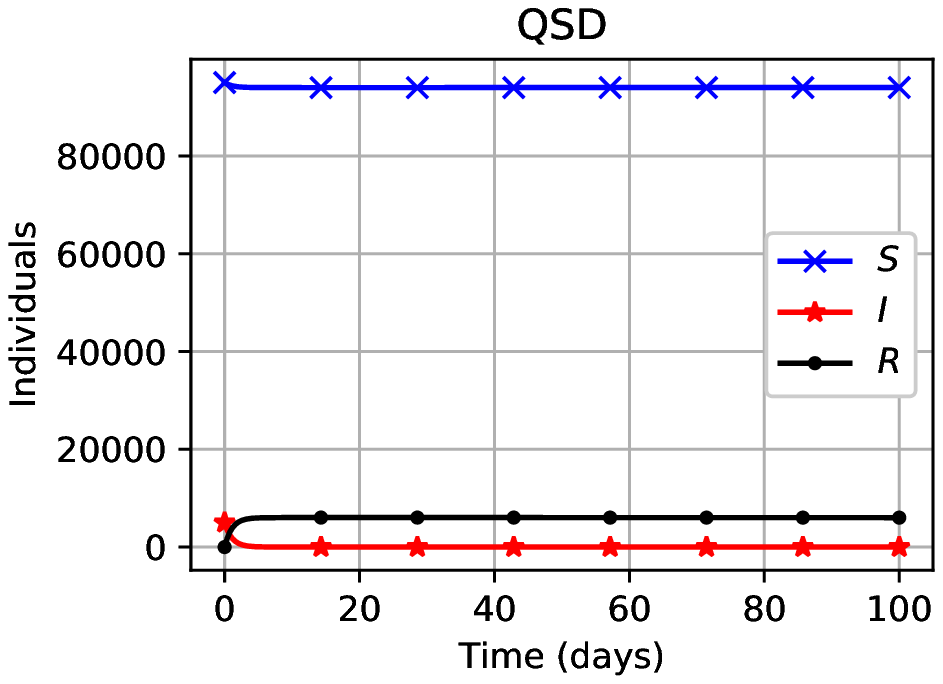}}\\
  \subfloat[]{
    \includegraphics[width=0.4\textwidth]{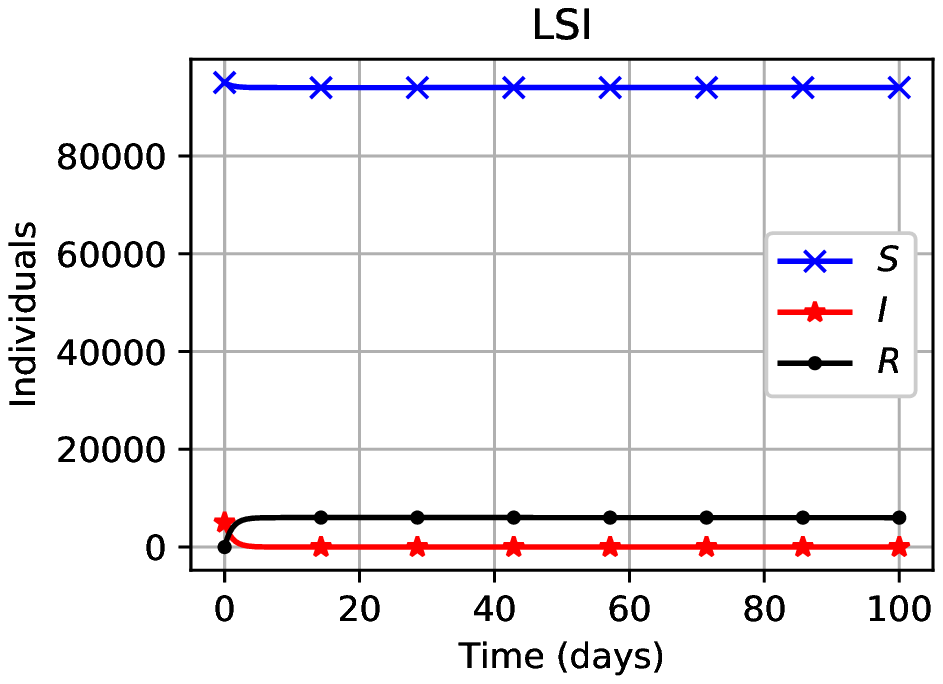}}
  \subfloat[]{
    \includegraphics[width=0.4\textwidth]{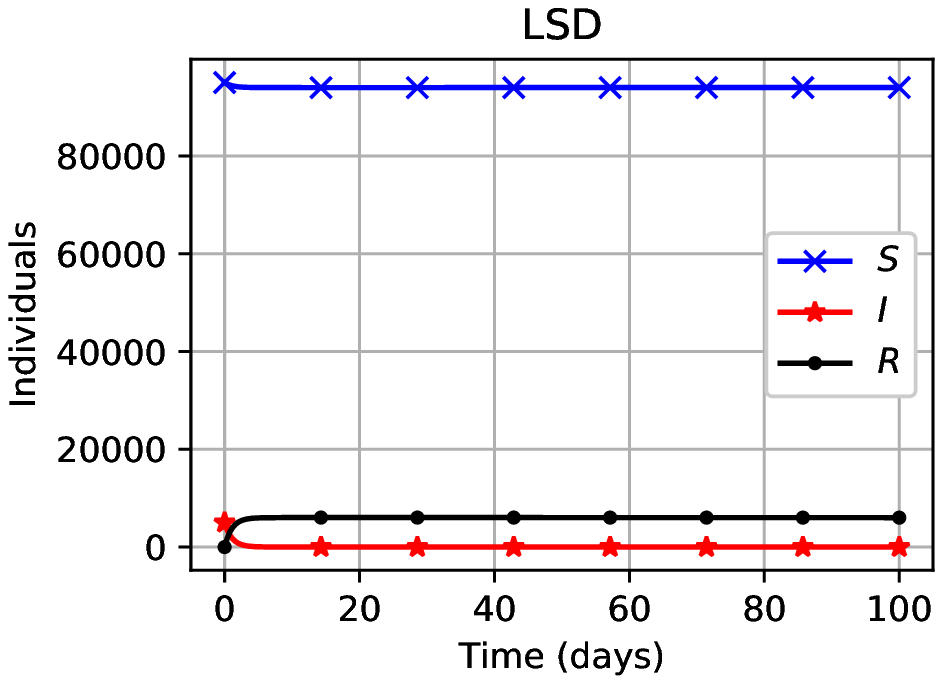}}
 \caption{Associated optimal states of the controlled SIR model \eqref{ControlledSIR} for the QSI  (a),  QSD  (b),  LSI  (c) and LSD  (d) cases, respectively.}
 \label{fig:statesSIR}
\end{figure}\par 
The parameters for the numerical simulations are taken from \citep{rachah2015} (Ebola model): specifically, $\mu=1/(70\times 365)$ (days), $\beta=0.2$ ($1/\text{days}$), and $\gamma=0.1$ ($1/\text{days})$. These parameters were estimated under the assumption that $95\%$ of the population is susceptible and $5\%$ infectious, therefore, we use initial conditions $N_{0}=100000$, $S(0)=95000$, $I(0)=5000$, $R(0)=0$. The final time for our simulations is $t_{f}=100$ days. We consider the optimal control problem \eqref{OCproblem} denoting the optimal control functions $u^{*}_{QSI}$, $u^{*}_{QSD}$, $u^{*}_{LSI}$ and $u^{*}_{LSD}$ for the cases QSI, QSD, LSI and LSD, respectively. The optimal control profiles are shown in Fig. \ref{fig:controlsSIR}, and the associated optimal states for each case are shown in Fig. \ref{fig:statesSIR}.\par 
%
Fig. \ref{fig:controlsSIR} indicates that the results can be roughly separated into two groups, namely, the results for costs QSI and LSI on one hand, and those for QSD and LSD on another hand. The results for QSD and LSD are very similar. The similarity between QSI and LSI is not so apparent, but the difference is only in the rate of the transmission process between the maximum and minimum values of $u(t)$. (It can be expected that the higher is the value of power $n$ in the term $u^n$ in the objective functional, the slower is the transmission.) The most remarkable is, however, the difference between these two groups and, in particular, the visible difference between the costs QSI and QSD (both these costs are quadratic with respect to $u$), as well as the costs LSI and LSD (both these costs are linear with respect to $u$). This observation leads to a conclusion that, at least for the cases where a cost of intervention policy is low compared with the burden of the disease itself, the optimal controls for the state-independent intervention costs $A_2 u^n $, for all $n \in [1,2]$, should be somewhere between the controls depicted in Fig. \ref{fig:controlsSIR} (a) and Fig. \ref{fig:controlsSIR} (c), and that the impact of the actual value of power $n$ in the cost might be of no principal importance. The same conclusion is even more apparent for the state-dependent intervention costs of the form $A_2 u^n I$, where $n \in [1,2]$. That is, the results in the literature that are obtained for the costs quadratic with respect to control $u(t)$  are valid, at least qualitatively, for the control problems where costs are linear with respect to control and for all intermediate costs $u^n (t)$ where $n \in [1,2]$. This  implies that results for the objective functions that are quadratic with respect to controls, which are comparatively easy to deal with mathematically, are still able to  provide a valuable insight into the qualitative behavior of the optimal controls, and give a correct idea of optimal controls principal properties. We have to stress that this conclusion is valid only for comparatively low-cost intervention policies.\par    
At the same time, the results also indicate that an actual form of the dependency of the intervention costs on the state variables is an issue of principal importance, and that a variation of this can involve significant changes in the outcomes; please compare the controls in Fig. \ref{fig:controlsSIR}(a) and Fig. \ref{fig:controlsSIR}(b).

\section{Cost-effectiveness analysis for the $SIR$ model}\label{sec:CEAsinglecontrol}
Cost-effectiveness analysis is a useful tool that allows to compare the relative health and economic consequences associated with different strategies~\citep{muennig2016}. We compare the cost and health benefits of the control strategies $\pi_{i}$ defined in terms of the optimal control functions $u_{i}^{*}$  (where $i=\lbrace QSI, QSD, LSI, LSD\rbrace$). In order to do this, we need to compute the cost and effectiveness for each of these control strategies using some appropriate health outcome, such as lives saved, the total number of infections averted and years of life gained.\par 
In this study, to measure the effectiveness of an intervention $\pi$, we define the cumulative level of averted infections $E(\pi)$ as the difference between the total numbers of the infected individuals in the absence of control and of those with the control $\pi$:
\begin{equation}
\label{CumuAverted}
E(\pi) = \int_{0}^{t_f} 
\left( I_f(t) - \tilde{I}_f(t) \right) dt.
\end{equation}
Here, $I_f(t)$ and $\tilde{I}_f(t)$ are, respectively, the numbers of infected individuals for the no--control scenario and under intervention $\pi$ at time $t$.\par 
To compute the total cost, we can use any of the objective functionals~\eqref{CostFunctional}. However, one has to take into consideration the rather obvious fact that if we use the objective functional $J_{1}$ with cost function $\sigma_{1}(u,I)$ modeling QSI cost, then the corresponding optimal control $u_{QSI}^{*}(t)$ is the most cost-effective strategy. Therefore, in an attempt to make a fair comparison of the control strategies for different intervention costs, for each of the control strategies we compute four costs using all four objective functionals $J_{i}$, $i=1,2,3,4$. In other words, if $\pi$ is a control intervention defined by the optimal control function $u(t)$, then the total cost of applying $\pi$ is defined as
\begin{equation}
C_{i}(\pi)=J_{i}(u(t))=\int_{0}^{t_{f}}\varphi(\tilde{I}(t))+\sigma_{i}(u(t),\tilde{I}(t))dt, \quad i=1,2,3,4.
\end{equation}\par 
The results of these computations are shown in Table \ref{tab:costos1}.

\begin{table}[h!]
\centering
\begin{tabular}{lccccc}
\toprule
Strategy &
$C_{1}(\pi_{i})$  &
$C_{2}(\pi_{i})$  &
$C_{3}(\pi_{i})$  &
$C_{4}(\pi_{i})$  &
$E(\pi_{i})$  \\
\toprule
$\pi_{QSI}$ & 548971.74 & 603701.05 & 549060.76 & 603701.38 & 800676.1 \\
$\pi_{QSD}$ & 549009.46 & 603691.32 & 549074.40 & 603691.32 & 800676.2 \\
$\pi_{LSI}$ & 549035.59 & 603696.71 & 549036.47 & 603696.71 & 800675.5 \\
$\pi_{LSD}$ & 549009.69 & 603691.32 & 549074.33 & 603691.32 & 800676.2 \\ 
\bottomrule
\end{tabular}

\caption{Total costs using each of the cost functions $C_{i}$ ($i=1,2,3,4$) and effectiveness (infection averted), for the control strategies.}
\label{tab:costos1}
\end{table}

Before we proceed to the cost-effectiveness analysis, we have to point out that results in Table~\ref{tab:costos1} confirm the above-made conclusion regarding the qualitative similarity between the optimal controls in strategies  QSI and LSI on one hand, and QSD and LSD on the other hand, and regarding the disparity between these two couples of scenarios. Indeed, for all four strategies, the difference between costs $C_1(\pi_i)$ and $C_3(\pi_i)$ in Table~\ref{tab:costos1} is less than 0.02\%, and it is even smaller for the costs $C_2(\pi_i)$ and $C_4(\pi_i)$. At the same time, for all strategies, the differences between $C_1(\pi_i)$ and $C_2(\pi_i)$ (and, thus, between $C_3(\pi_i)$ and $C_4(\pi_i)$) are about 10\% .    

The cost-effectiveness analysis firstly applies the principle of strong dominance \citep{muennig2016}. According to this principle, if one of the considered interventions is more effective and less expensive, it is said to be dominant. If the intervention is more expensive and less effective than an alternative one, it is said to be dominated. However, for the interventions that are more effective but at the same time are more costly than the alternatives, the principle of strong dominance provides no guidance. Please note that, for the considered model, the effectiveness is practically the same for all four control strategies (see Table \ref{tab:costos1}). This is due to the fact that the associated states are almost the same for each of the four considered control functions. Therefore, the cost-effectiveness analysis is straightforward: the strategy with the lowest cost in the same time is the most cost-effective strategy.\par 
The corresponding ranking of the strategies is given in Table \ref{tab:ranking1}. Please note that each of the $\pi_{i}$ strategies is the most cost-effective strategy with respect to its own associated objective functional. This outcome is hardly unexpected, and implies that the decisive conclusion regarding the cost-effectiveness of the strategies can be made only on the basis of a specific cost function, and that if this cost function is used as the objective functional (as it should be), then the corresponding optimal control and the optimal solution will be the best-performing strategy.

It can be important to identify the strategy exhibiting the second best performance. However, due to the fact that the strategies QSI and LSI on one hand, and QSD and LSD on the other hand, lead to qualitatively similar outcomes, one can expect that the second best performer in the ranking must belong to the same couple as the best one. Table~\ref{tab:ranking2} confirms this expectation for three costs of four considered. (Cost $C_1$ based of the objective QSI is the only cost of the four where strategies QSD and LSD exhibited better cost-efficacy than LSI). 
\begin{table}[h!]
\centering

\begin{tabular}{lcccc}
\toprule
Strategy &
$C_{1}-Rank$  & $C_{2}-Rank$  & $C_{3}-Rank$  & $C_{4}-Rank$  \\
\toprule
$\pi_{QSI}$  & 1          & 4 & \textbf{2} & 4  \\ 
$\pi_{QSD}$  & \textbf{2} & 1 & 4 & \textbf{2}  \\ 
$\pi_{LSI}$  & 4          & 3 & 1 & 3  \\ 
$\pi_{LSD}$  & 3      & \textbf{2} & 3 & 1   \\ 
\bottomrule
\end{tabular}

\caption{Best-performance ranking for the control strategies using each of the four possible cost functions. The strategies with the second best performance are marked in bold.}
\label{tab:ranking1}
\end{table}

\section{Optimal control problem for a $SEIRS$ model with two controls}\label{sec:OCseirs}
The above considered $SIR$ model \eqref{ControlledSIR} with a single control  is one of the simplest epidemic models in the literature. This simplicity makes the problem amenable to mathematical analysis; however, at the same time, due to this simplicity, the influence of different cost functionals on the solution of the optimal control problem was difficult to detect. Therefore, in this section, we extend our study to a more complicated case of the compartmental $SEIRS$ epidemic model, which includes two control functions. We believe that analysis of this model allows to highlight more distinctively the role of a form of the objective functional on the optimal solutions.\par 
The dynamics of $SEIRS$ model is governed by the following system of differential equations:
\begin{equation}
\begin{aligned}
\dfrac{dS}{dt}&=\mu N -\beta \dfrac{SI}{N}-v(t)S +\theta R -\mu S,\\
\dfrac{dE}{dt}&=\beta \dfrac{SI}{N} -\alpha E - \mu E,\\
\dfrac{dI}{dt}&=\alpha E -\gamma I -u(t)I -\mu I,\\
\dfrac{dR}{dt}&=\gamma I + u(t)I + v(t)S - \theta R -\mu R,\label{ControlledSEIRS}
\end{aligned}
\end{equation}
where the total population size $N(t)=S(t)+E(t)+I(t)+R(t)$ is assumed to be constant. 
The  model postulates that the disease has a latent period and, hence, after infection, individuals pass from the susceptible compartment $S$ into the exposed compartment $E$. The individuals in this compartment are infected but not yet infectious. The parameter $\alpha$ is the rate at which the exposed individuals become infectious. The recovered individuals lose the temporary immunity at a rate $\theta$ and return to the susceptible class $S$. The rest of parameters are the same as in the $SIR$ model \eqref{ControlledSIR}. 

Compared with the model \eqref{ControlledSIR}, in this model we incorporate an additional time-dependent control function $v(t)$ that represents the per capita vaccination rate of the susceptible individuals. Accordingly, the vaccinated susceptible individuals enter into the recovered class. We assume that the immunity acquired through the vaccination is also temporary, and that the vaccinated susceptible individuals loss immunity at the same rate as the other recovered individuals. (Such a situation with the loss of immunity occurs if the virus mutates, as in the case of influenza.)
%
The control problem involves minimization of the cumulative number of infectious individuals on a given time interval and of the cost of the intervention policy. The objective functional is defined as follows:
\begin{equation}
J_{i}(v(t),u(t))=\int_{0}^{t_{f}}A_{1}I(t)+\sigma_{i}(v(t),u(t),I(t))dt.\label{CostFunctional2}
\end{equation}
Below we consider the following forms for the cost function $\sigma_{i}(v,u,I)$:
\begin{eqnarray}
\sigma_{1}&=&A_{2}(v^{2}+u^{2}), \; \text{quadratic state independent cost (QSI)},\label{c1SEIRS}\\
\sigma_{2}&=&A_{2}(v^{2}S+u^{2}I), \; \text{quadratic state dependent cost (QSD)}\label{c2SEIRS},\\
\sigma_{3}&=&A_{2}(v+u), \; \text{linear state independent cost (LSI)},\label{c3SEIRS}\\
\sigma_{4}&=&A_{2}(vS+uI), \; \text{linear state dependent cost (LSD)}.\label{c4SEIRS}
\end{eqnarray}
As before, the optimal control problem consists of finding control functions $v^{*}(t)$ and $u^{*}(t)$, such that  
\begin{equation}
J_{i}(v^{*},u^{*})=\min_{(v,u)\in \mathcal{U}}\left\lbrace J_{i}(v(t),u(t))\right\rbrace \;
\text{subject to system}\; \eqref{ControlledSEIRS}\label{OCproblem2}
\end{equation}
where the set of admissible controls is
\begin{equation}
\mathcal{U}=\left\lbrace (v(t),u(t)) \vert (v(t),u(t))\; \text{measureble},\; 0\leq v(t),u(t)\leq 1, \; t\in[0,t_{f}]\right\rbrace .\label{ControlSet2}
\end{equation}\par 
The proof of the existence of an optimal control $(v^{*}(t),u^{*}(t))$ and corresponding optimal solution for problem \eqref{OCproblem2} follows the same argument of the existence proof for the $SIR$ model with a single control, and is therefore omitted.

\subsection{Numerical results for the SEIRS model}
\begin{figure}[hbtp]
\vspace*{-70pt}
 \centering
  \subfloat[]{
    \includegraphics[width=0.4\textwidth]{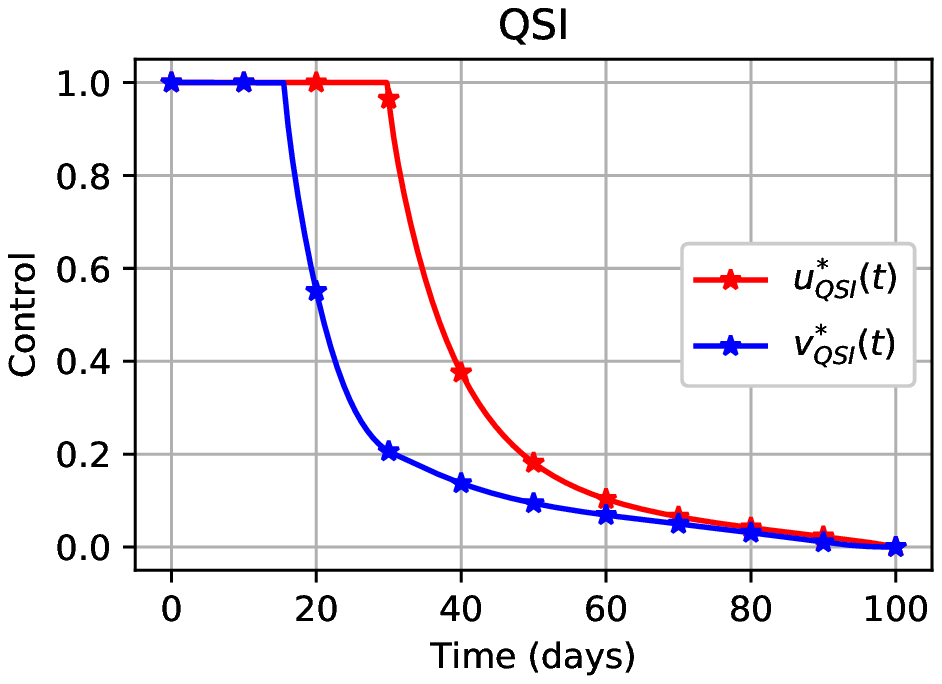}}
  \subfloat[]{
    \includegraphics[width=0.4\textwidth]{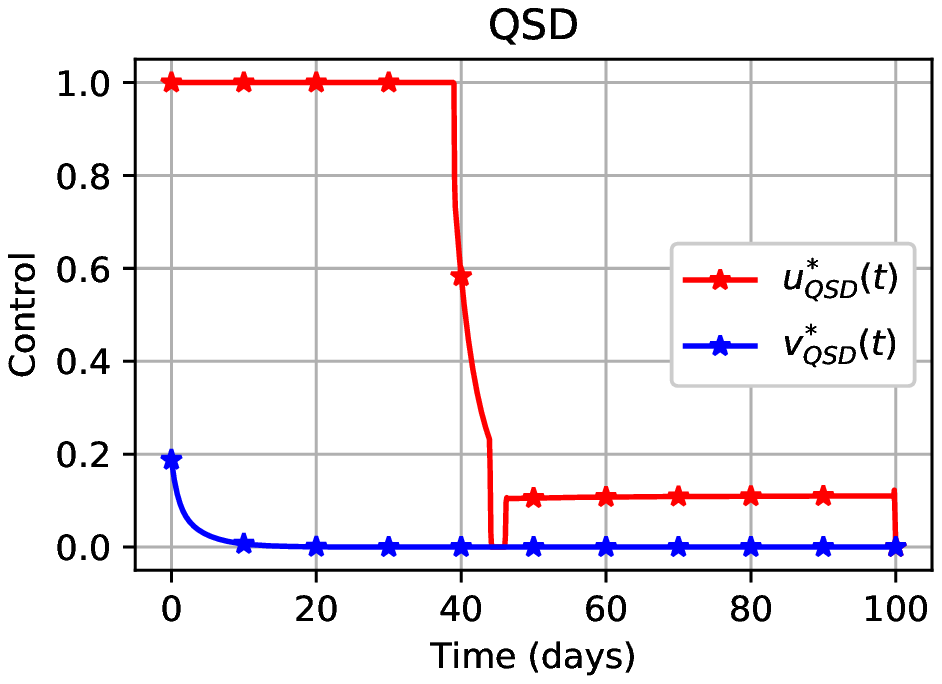}}\\
  \subfloat[]{
    \includegraphics[width=0.4\textwidth]{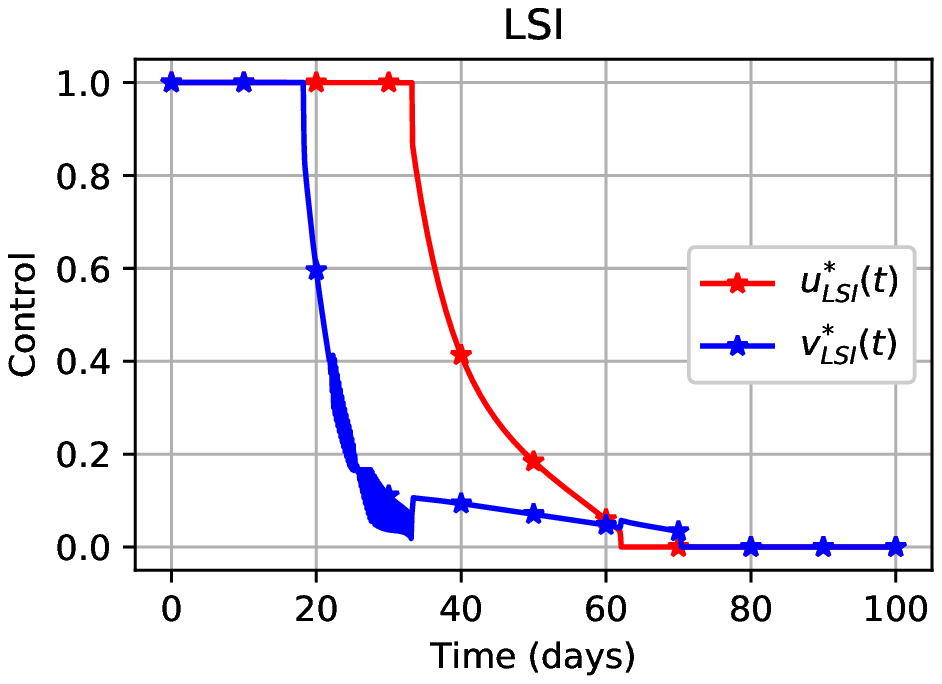}}
  \subfloat[]{
    \includegraphics[width=0.4\textwidth]{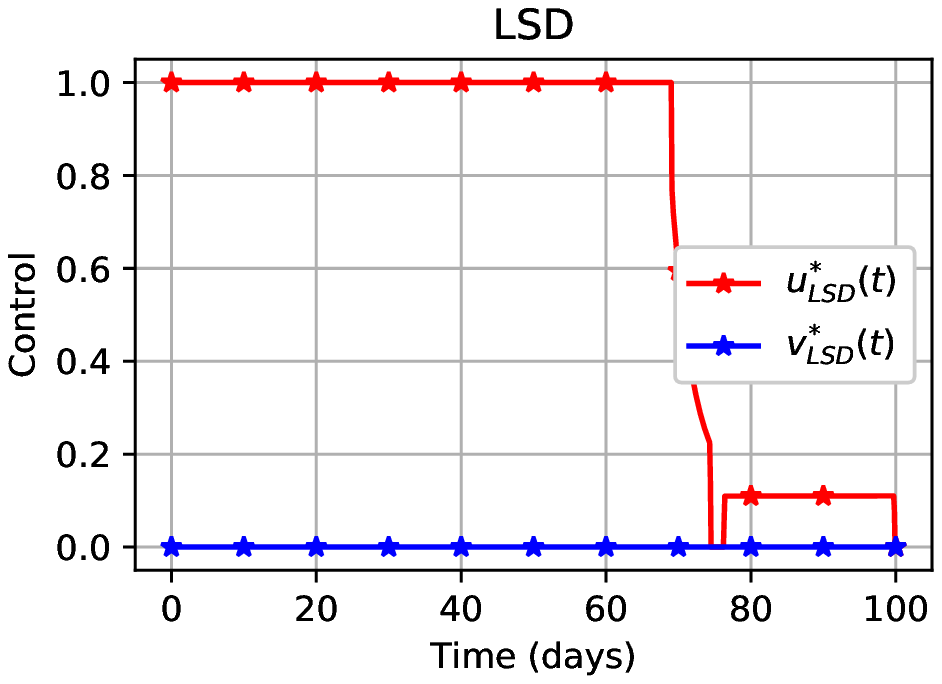}}
 \caption{Associated optimal control pair of the controlled SEIRS model \eqref{ControlledSEIRS} for the QSI  (a),  QSD  (b),  LSI  (c) and  LSD  (d) cases, respectively.}
 \label{fig:controlsSEIRS}
 
   \subfloat[]{
    \includegraphics[width=0.4\textwidth]{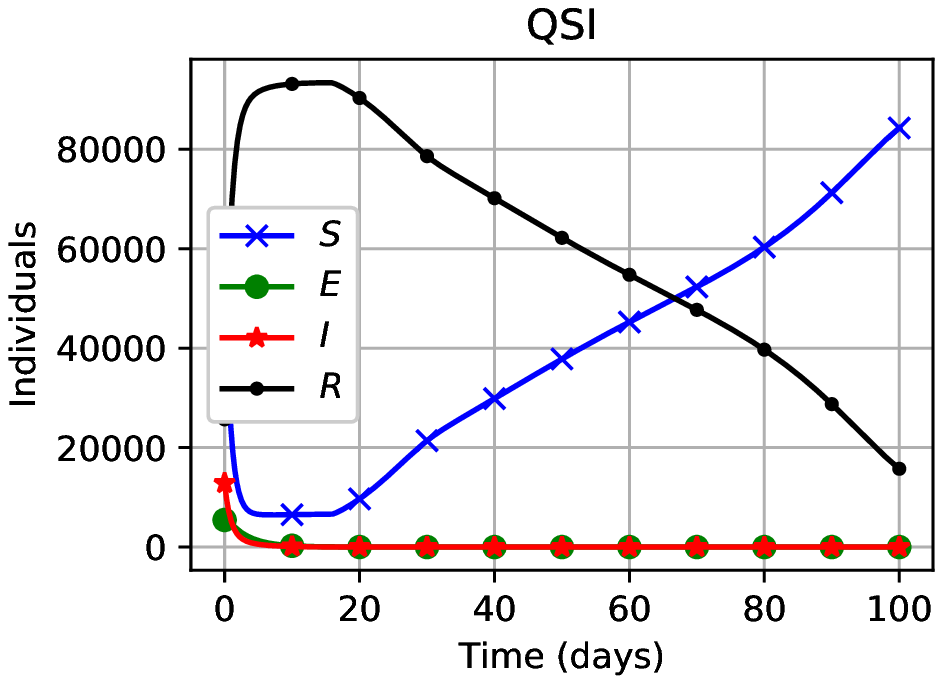}}
  \subfloat[]{
    \includegraphics[width=0.4\textwidth]{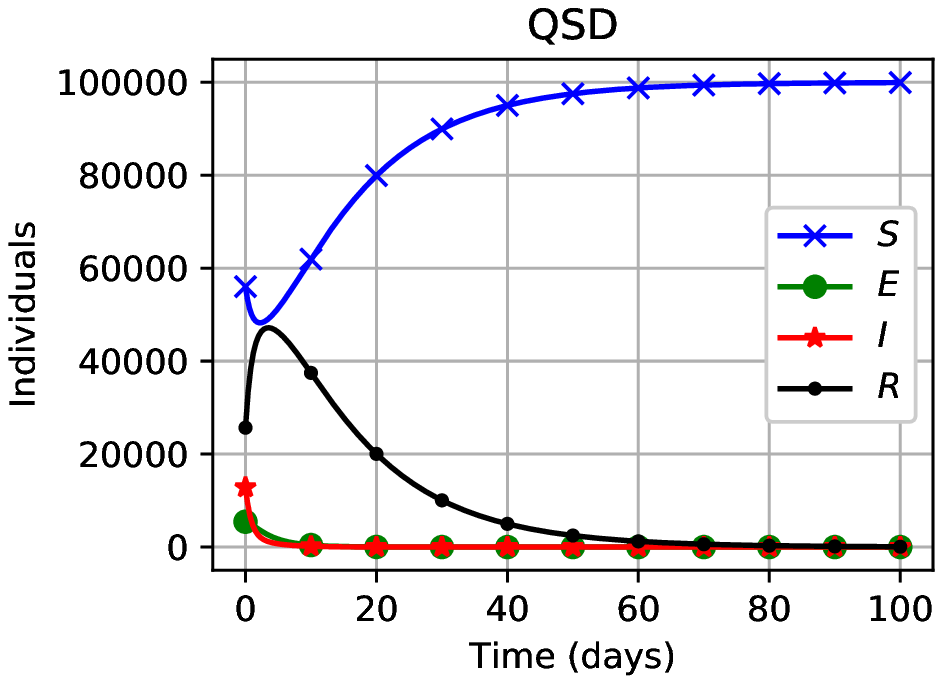}}\\
  \subfloat[]{
    \includegraphics[width=0.4\textwidth]{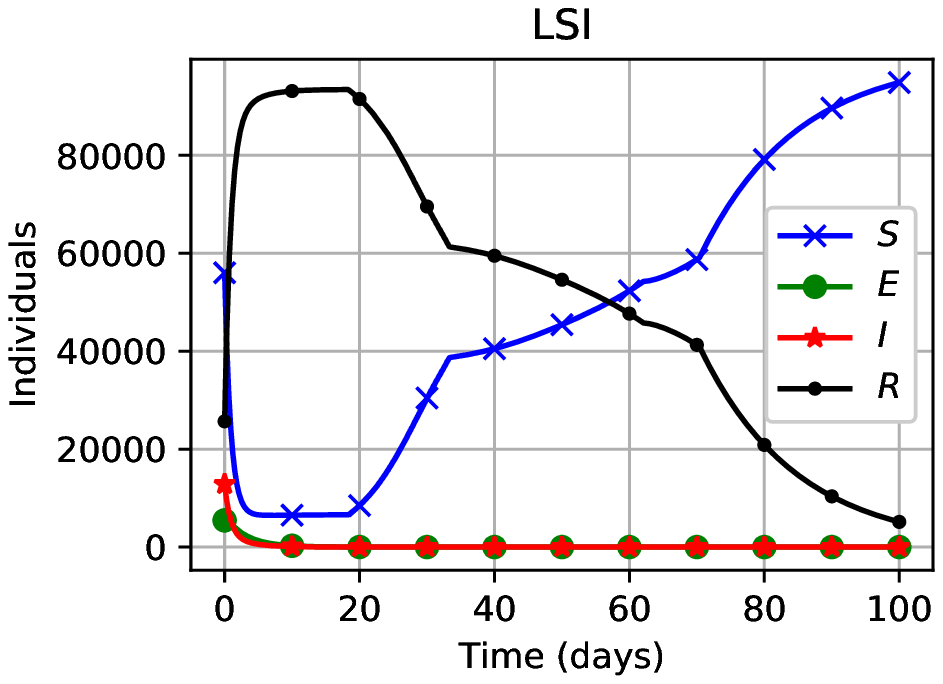}}
  \subfloat[]{
    \includegraphics[width=0.4\textwidth]{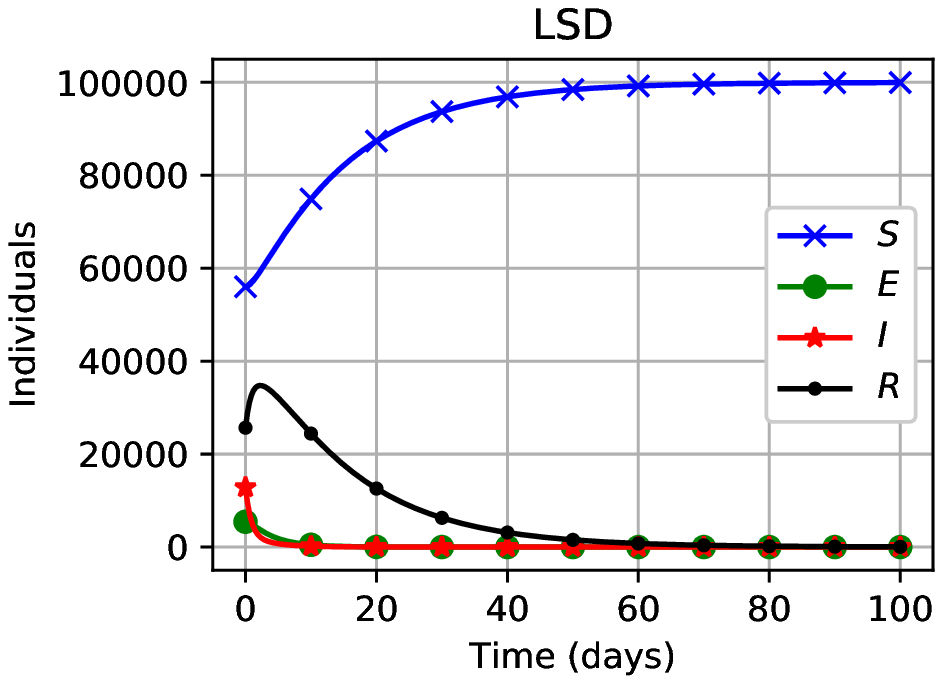}}
 \caption{Associated optimal state of the controlled SEIRS model \eqref{ControlledSEIRS} for the QSI  (a),  QSD  (b),  LSI  (c) and LSD  (d) cases, respectively.}
 \label{fig:statesSEIRS}
\end{figure}\par 
We use the BOCOP 2.2.1 software to obtain numerical approximations of the solutions to the optimal control problem \eqref{OCproblem2} for each of the objective functionals. The parameters for the numerical simulations are taken from \citep{trawicki2017}, specifically, $\mu=0.00003$ ($1/\text{days}$), $\beta=0.25$ ($1/\text{days}$), $\gamma=0.14$ ($1/\text{days}$), $\alpha=0.33$ ($1/\text{days}$), and $\theta=0.07$ ($1/\text{days}$). The final time is $t_{f}=100$ days, and the total population is $N_{0}=100000$. 

Figure \ref{fig:controlsSEIRS} shows the numerical approximations of the control profiles. Please note that the optimal controls for the density independent objective functionals QSI and LSI are similar in qualitative terms. In the same way, the optimal controls for the density dependent cases QSD and LSD also have very similar profiles. The associated optimal states are shown in Figure \ref{fig:statesSEIRS}.\par

\section{Cost-effectiveness analysis for the $SEIRS$ control model}\label{sec:CEAseirs}
To investigate the performance of the control strategies $\pi_{i}$, $i=\lbrace QSD,$ $QSI$, $LSD$, $LSI\rbrace$,  defined in terms of the optimal control pairs $(v_{i}^{*},u_{i}^{*})$, we perform the cost-effectiveness analysis. As discussed above, we measure the effectiveness of a control strategy by the cumulative number of averted infections. To compute the total cost, we use the objective functionals $J_{i}$ \eqref{CostFunctional2}. Hence, if $\pi$ is a control intervention defined by the optimal control pair $(v, u)$, then the total cost of the intervention $\pi$ is defined as
\begin{equation}
C_{i}(\pi)=J_{i}(v,u)=\int_{0}^{t_{f}}A_{1}I+\sigma_{i}(v,u,I)dt, \quad i=1,2,3,4.\label{CostFunctionSEIRS}
\end{equation}
To proceed with the comparison of the interventions, we use the model outcomes to compute the effectiveness and total cost for each of the four considered control strategies. The results are summarized in Table \ref{tab:costos2}. 
\begin{table}[h!]

\centering
\begin{tabular}{lccccc}
\toprule
Strategy &
$C_{1}(\pi_{i})$  &
$C_{2}(\pi_{i})$  &
$C_{3}(\pi_{i})$  &
$C_{4}(\pi_{i})$  &
$E(\pi_{i})$  \\
\toprule
$\pi_{QSI}$ &
1697872.45 &	3836175.45 &	1698013.74 &	5862075.05 &	1267715.57
 \\
$\pi_{QSD}$ &
1813853.47 &	2012056.49 &	1813920.59 &	2226189.56 &	1266565.05
 \\
$\pi_{LSI}$ &
1697892.72 & 3748946.84 & 1697983.63  & 5155798.23 & 1267715.59 \\
$\pi_{LSD}$ &
1848722.88 &	2032820.69 &	1848757.69 &	2032820.70 &	1266208.63
 \\ 
\bottomrule
\end{tabular}

\caption{Total costs using each of the cost functions $C_{i}$ ($i=1,2,3,4$) and effectiveness (infection averted), for the control strategies.}
\label{tab:costos2}
\end{table}

At first glance, from Table~3 one can note that the effectiveness of the considered strategies does not differ much: the largest difference is of order of 0.1\%. Moreover, $\pi_{QSI}$ and $\pi_{LSI}$ have virtually the same effectiveness. Furthermore, the costs $C_1$  and $C_3$ are virtually the same for all four objective functionals. (The largest difference is smaller than 0.01\%.) At the same time, in contrast to the same data for the $SIR$ model, for the $SEIRS$ model costs $C_2$ and $C_4$ significantly differ, in particularly for strategies $\pi_{QSI}$ and $\pi_{LSI}$. (The difference for $\pi_{QSD}$ is of order of 10\%, whereas for $\pi_{LSD}$ costs $C_2$ and $C_4$ are virtually the same.) This observation implies that for more complicated models, and for the objective functional where the control function appears in the form of a power function $u^n$, the cost of the strategies can depend on the power $n$ more than for a simpler model. However, the outcomes for objectives  QSI and LSI, on one hand, and QSD and LSD, on another hand, still remain qualitatively similar, and, hence, this dependency still remains considerably lower than that on the state variables. (We have to remind that, according to Figs.~\ref{fig:controlsSEIRS} and \ref{fig:statesSEIRS}, the optimal controls themselves exhibit rather weak dependency on the power $n$, while dependency of the state variable is significant.) 

Using the data in Table~\ref{tab:costos2}, we obtain the cost-effectiveness ranking in Table~\ref{tab:ranking2}. 
The most important observation from the table is that it confirms the existence of the stable pairs QSI and LSI, on one hand, and QSD and LSD, on another hand.

As an illustration of how the cost-effectiveness rankings are computed, we proceed to show explicitly the calculations for the $C_4-Rank$ column of Table~\ref{tab:ranking2}. Using the principle of strong dominance, we can rule out intervention $\pi_{QSI}$, which is dominated by intervention $\pi_{LSI}$ yielding the same effectiveness at a lower cost. 

\begin{table}[h!]
\centering
\begin{tabular}{lcccc}
\toprule
Strategy &
$C_{1}-Rank$  & $C_{2}-Rank$  & $C_{3}-Rank$  & $C_{4}-Rank$  \\
\toprule
$\pi_{QSI}$  
& 1 & 4 & \textbf{2} & 4  \\ 
$\pi_{QSD}$  
& 3 & 1 & 3 &  \textbf{2}   \\ 
$\pi_{LSI}$  
& \textbf{2} & 3 & 1 & 3  \\ 
$\pi_{LSD}$  
& 4 & \textbf{2} & 4 & 1   \\ 
\bottomrule
\end{tabular}
\caption{Best-performance ranking for the control strategies using each of the four possible cost functions. The strategies with the second best performance are marked in bold.}
\label{tab:ranking2}
\end{table}

However, we have to observe that, for the \emph{SEIRS} model, the principle of strong dominance is not sufficient to determine the most cost-effective intervention, and, therefore, more sophisticated techniques should be applied. When the more effective strategy is also more costly, the decision maker must decide if the greater effectiveness justifies the cost of achieving it. Usually, this can be done by calculating the incremental cost-effectiveness ratio (ICER) of two (or more) strategies, which is defined as the difference in the costs divided by the difference in the effectiveness. In mathematical terms, for two interventions $\pi_1$ and $\pi_2$, 
\begin{equation}
\label{ICER-two}
ICER(\pi_1, \pi_2) = \frac{C(\pi_2) - C(\pi_1)}{E(\pi_2) - E(\pi_1)},
\end{equation}
provided that $E(\pi_1) \neq E(\pi_2)$. The ICER of each intervention is found by comparing it to the next most effective intervention. The ICER are summarized in Table \ref{tab:CEA_explanation}).\par

\begin{table}[h!]
\centering

\begin{tabular}{lccc}
\toprule
Strategy   &  $C_{4}(\pi_{i})$  & $E(\pi_{i})$  & ICER        \\
\toprule
$\pi_{LSI}$  &  5155798.24 & 1267715.59   & $ICER(\pi_{LSI},\pi_{QSD})=2547.48$  \\
$\pi_{QSD}$  &  2226189.56 &	1266565.05 & $ICER(\pi_{QSD},\pi_{LSD})=542.53$   \\ 
$\pi_{LSD}$  &  2032820.70 &	1266208.63 & \\ 
\bottomrule
\end{tabular}

\caption{Interventions listed in the order of effectiveness together with their ICER computed by comparing with the next most effective intervention. ($\pi_{QSI}$ is omitted as dominated by $\pi_{LSI}$.)}
\label{tab:CEA_explanation}
\end{table}\par

Alongside to the ICER, we also introduce the average cost-effectiveness ratio (ACER) as a willingness-to-pay threshold. The ACER is determined with respect to the ``no effect''/``no cost'' alternative, and is mathematically given as \citep{phillipsCEA}:
\begin{equation}
\label{ACER}
ACER(s) = \dfrac{C(S)}{E(S)}.
\end{equation}
As a willingness-to-pay threshold, we use ACER of the less costly intervention, which in this case is $\pi_{LSD}$ (see Table~5):
\begin{equation}
ACER(\pi_{LSD}) = \dfrac{2032820.70}{1266208.63} = 1.6054.
\end{equation}
Since $ICER(\pi_{LSI}, \pi_{QSD}) >\; ACER(\pi_{LSD})$ and $ICER(\pi_{QSD}, \pi_{LSD}) >\; ACER(\pi_{LSD})$, the strategy $\pi_{LSD}$ is the most cost-effective strategy. Similarly, we compute the ACER for $\pi_{QSD}$,
\begin{equation}
ACER(\pi_{QSD}) = \dfrac{2226189.56}{1266565.05} = 1.7576.
\end{equation}
Since $ICER(\pi_{LSI}, \pi_{QSD}) >\; ACER(\pi_{LSD})$, the strategy $\pi_{QSD}$ is the second most cost-effective strategy, and thus $\pi_{QSD}$ has Rank \#2, $\pi_{LSI}$ has Rank \#3, and $\pi_{QSI}$ has Rank \#4.\par 

\section{Discussion}\label{sec:discussion}
There are hardly any doubts that the optimal control theory applied to the problems originated in medicine and biology can potentially bring significant benefits. At the same time, so far, such applications are rather limited and generally undervalued. The authors believe that one of the reasons for this is a controversy with objectives of a control strategy that are formulated in the form of objective functionals. The problem, as the authors see it, is that the vast majority of publications in the literature consider objective functionals that include a sum of weighted squares of the controls. While the functionals of such a form lead to mathematically convenient problems, their biomedical interpretation is often uncertain and even dubious. Besides, the objective functionals where the controls are included as a function $u^n$ with $n \neq 2$ (and, in particular, with $n=1$, that appears to be the most natural case for many biomedical applications) imply the necessity of dealing with a considerably more complicated mathematical problem and, therefore, are rather rare in the literature. This controversy leads to a question regarding the extent to which results obtained from a specific objective functional are valid and can be trusted. Unfortunately, despite its obvious practical relevance, so far this issue has attracted rather small attention: so far the authors were able to find only two publications~\citep{ledzewicz2004comparison,ledzewicz2020comparison} that address this issue. Both publications are of the same authors and are dealing with within-a-patient disease dynamics.

%
To further address this issue, in particularly concerning infection control at a level of the entire population, in this paper we explored the impact of a form of the objective functionals with respect to controls on the structure of the optimal controls and the associated optimal solutions for two reasonably simple classical   epidemiological models.   Specifically, we considered a \emph{SIR} and a \emph{SEIRS} models of the spread of an infectious disease in a population. For these models, we considered the objective functionals where the cost of control action were assumed to be a linear and a quadratic functions ($L_2$- and $L_1$-type functionals). Moreover, for both these types, we considered the costs that depend of the density of individuals and are density-independent. To quantitatively compare the impacts of specific forms of the objective functionals on the optimal controls, in this paper we also perform cost-effectiveness analysis. 

%

Our numerical simulations show that for both, the $SIR$ model \eqref{ControlledSIR} and the $SEIRS$ model \eqref{CostFunctionSEIRS}, the optimal controls for the both considered density-independent cases, where the controls are included as a quadratic (the QSI scenario) and a linear (the LSI scenario) functions, are qualitatively similar. Likewise, the optimal controls and optimal solutions for both density-dependent cases, where the controls are included as quadratic (the QSD scenario) and linear (the LSD scenario) functions, have very similar profiles as well. At the same time, the optimal controls and the optimal solutions for the density-independent and the density-dependent cases significantly differ. This observation allows us to make a very important conclusion that, for the objective functionals that include the controls in the form of a power function $u^n$, and at least for the cases where the cost of intervention is notably lower than the potential losses inflicted by the infection (10\% in our cases), the optimal controls and optimal solutions are robust with respect to the power $n$.

The second conclusion that follows from our results is that the dependency of the cost of intervention on the state variables (that is, on the densities of the considered classes) has a  remarkably large impact on the optimal control and the corresponding optimal solution. Hence,  for application of the optimal control theory to real-life practice, a specific form of the dependency of the costs on the state variables is significantly more important than the dependency of the cost on the controls themselves. These conclusions indicate that, for a specific predetermined form of the dependency of the cost of intervention on the state densities, the most common in the literature results obtained for the intervention costs that depend on the squares of the controls are still able to provide valuable insights and can serve as a guideline for the cases where the real-life dependency of the cost on the controls is uncertain or is assumed to be a non-quadratic. We have to stress again that this conclusion is valid for the cases where the cost of intervention is notably lower than the losses inflicted by the infection. At the same time, we also have to stress that, while, for a specific predetermined form of the dependency of the cost on the densities, the optimal controls and optimal solutions are reasonably robust with respect to a variation of the power $n$, the actual cost can be notably different for different values of the power $n$.

\bibliographystyle{apalike}
\bibliography{references}










\appendixpageoff
\appendixtitleoff
\begin{appendices}
\setcounter{section}{0}
  
\begin{center}
\textbf{\large Supplemental Materials: }

{\large
Impact of a cost functional on the optimal control and the cost-effectiveness: control of a spreading infection as a case study}

\medskip

Fernando Salda\~na$^1$, Ariel Camacho$^2$, Andrei Korobeinikov$^3$

\small{
$^1$ \textit{Instituto de Matem\'aticas, Campus Juriquilla, 76230, Universidad Nacional Aut\'onoma de M\'exico, Qu\'eretaro, Mexico}

$^2$ \textit{Facultad de Ciencias, Universidad Aut\'onoma de Baja California, 22860 Baja California, Mexico}

$^3$ \textit{School of Mathematics and Information Science, Shaanxi Normal University, Xi\'an, China}}

\end{center}
\setcounter{equation}{0}
\setcounter{figure}{0}
\setcounter{table}{0}
\setcounter{page}{1}
\makeatletter
\renewcommand{\theequation}{S\arabic{equation}}
\renewcommand{\thefigure}{S\arabic{figure}}
\renewcommand{\bibnumfmt}[1]{[S#1]}
\renewcommand{\citenumfont}[1]{S#1}

%
\section{Existence of the optimal control}\label{AppendixExistence}
Let $x=(S,I,R)^{T}$ and denote the right-hand side of system \eqref{ControlledSIR} as the vector function $F(t,x,u)$. The optimal control problem is
\begin{equation}
\min_{u\in \mathcal{U}}J_{i}(u)=\min_{u\in \mathcal{U}}\int_{0}^{t_{f}}L_{i}(t,x,u)dt \quad
\text{subject to system}\; \eqref{ControlledSIR}, \label{OCproblemAppendix}
\end{equation}
where
\begin{equation}
L_{i}(t,x,u)=A_{1}I+\sigma_{i}(u,I)
\end{equation}
and cost functions $\sigma_{i}(u,I)$, $i=1,2,3,4$, are  given by  equations \eqref{c1}--\eqref{c4}. 
To prove the existence of an optimal control, we employ Theorem $4.1$ from Fleming and Rishel \citep[Chapter III]{Fleming1975}. This existence theorem states that the following conditions are sufficient to guarantee the existence of an optimal control for \eqref{OCproblemAppendix}:
\begin{itemize}
\item[(H1)] $F$ is continuous, and there exist positive constants $K_{1}$ and $K_{2}$ such that
\begin{itemize}
\item[(a)] $\vert F(t,x,u)\vert \leq K_{1}(1+\vert x\vert + \vert u\vert)$
\item[(b)] $\vert F(t,\tilde{x},u)-F(t,x,u)\vert \leq K_{2}\vert \tilde{x}-x\vert (1+\vert u\vert)$
\end{itemize}
hold for all $t\in[0,t_{f}]$. Moreover, $F$ can be written as $F(t,x,u)=\phi(t,x)+\psi(t,x)u$.
\item[(H2)] The set of admissible controls $\mathcal{U}$ is closed and convex. Moreover, there is at least one feasible pair $(x(t), u(t))$ satisfying both \eqref{ControlledSIR} and $u(t)\in\mathcal{U}$.
\item[(H3)] $L_{i}(t,x,\cdot)$, $i=1,2,3,4$ is convex on $\mathcal{U}$, and $L_{i}(t,x,u)\geq g(u)$, where $g$ is continuous and $\vert u\vert^{-1}g(u) \rightarrow +\infty$ as $\vert u\vert \rightarrow \infty$, $u\in \mathcal{U}$. 
\end{itemize}
\begin{theorem}\label{TheoremExistence}
Consider the optimal control problem \eqref{OCproblemAppendix} with control model \eqref{ControlledSIR} and cost functional $J_{i}(u)=\int_{0}^{t_{f}}L_{i}(t,x,u)dt$, $i=1,2,3,4$. Then there exist an admissible control function $u^{*}(t)\in \mathcal{U}(t_{f})$ such that $\min J_{i}(u)_{u\in \mathcal{U}}=J_{i}(u^{*})$.
\end{theorem}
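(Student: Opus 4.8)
The plan is to verify, one at a time, the three hypotheses (H1), (H2), and (H3) that precede the statement, after which the conclusion is exactly Theorem~4.1 of \citep[Chapter III]{Fleming1975} applied to \eqref{OCproblemAppendix}. The single preliminary observation that makes all three verifications clean is that the total population is conserved: summing the three equations of \eqref{ControlledSIR} gives $dN/dt = \mu N - \mu(S+I+R) = 0$, so $N(t) \equiv N_{0}$. Hence every solution starting in the non-negative simplex remains in the compact, positively invariant region $\Omega = \{(S,I,R) \in \mathbb{R}^{3}_{\ge 0} : S+I+R = N_{0}\}$, and it suffices to check the hypotheses for states restricted to $\Omega$, where all quantities are uniformly bounded.

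For (H1), continuity of $F$ is immediate, and the required decomposition $F(t,x,u) = \phi(t,x) + \psi(t,x)u$ holds with $\psi(t,x) = (0,-I,I)^{T}$, so $F$ is affine in the control. The only nonlinearity is the incidence term $\beta SI/N_{0}$, which is bilinear in the states; on $\Omega$ one has $0 \le S,I \le N_{0}$, so this term is bounded by $\beta N_{0}$ and is Lipschitz in $x$ with a constant controlled by $\beta$ and $N_{0}$. Adding the remaining linear terms then produces the growth bound (a) and the Lipschitz estimate (b) with constants $K_{1},K_{2}$ depending only on $\beta,\gamma,\mu,N_{0}$.

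For (H2), the admissible controls take values in the convex compact interval $[0,1]$: a convex combination $\lambda u_{1} + (1-\lambda)u_{2}$ of measurable $[0,1]$-valued functions is again measurable and $[0,1]$-valued, so $\mathcal{U}$ is convex, and it is closed in the relevant ($L^{2}$) topology. A feasible pair exists because for the admissible control $u \equiv 0$ the right-hand side of \eqref{ControlledSIR} is $C^{1}$, hence locally Lipschitz, so the Picard--Lindel\"{o}f theorem gives a unique local solution that extends to all of $[0,t_{f}]$ by invariance of $\Omega$. For (H3), I verify convexity of $L_{i}(t,x,\cdot) = A_{1}I + \sigma_{i}(\cdot,I)$ in $u$: the maps $u \mapsto A_{2}u^{2}$ and $u \mapsto A_{2}u^{2}I$ (with $I \ge 0$) are convex, while $u \mapsto A_{2}u$ and $u \mapsto A_{2}uI$ are affine and hence convex, so every integrand is convex in $u$. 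Finally, since each $L_{i}$ is non-negative on $\Omega$ (all weights and states are non-negative), one may take $g \equiv 0$; and because $\mathcal{U}$ confines the control to the bounded set $[0,1]$, the superlinearity requirement $|u|^{-1}g(u) \to +\infty$ as $|u| \to \infty$ is vacuous. With (H1)--(H3) in hand, the cited theorem yields the optimal control $u^{*}$.

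The step I expect to need the most care is (H1): as literally stated, the linear growth bound (a) cannot hold for $F$ on all of $\mathbb{R}^{3}$ because the incidence term is quadratic in the states, and it is exactly the conservation law $N \equiv N_{0}$ and the resulting confinement to the compact set $\Omega$ that tames this term and makes $K_{1},K_{2}$ uniform. A second, milder subtlety is that the linear-cost scenarios LSI and LSD do \emph{not} satisfy the classical superlinear coercivity of (H3) in the control; rather than attempt to verify it, I exploit the boundedness of $\mathcal{U}$ to make that part of the hypothesis vacuous, which is the cleanest way to treat all four cost functions $\sigma_{i}$ uniformly.
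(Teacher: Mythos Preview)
Your proposal is correct and follows essentially the same route as the paper: verify (H1)--(H3) via the conservation of $N$ (hence boundedness of trajectories), the affine-in-$u$ structure of $F$, convexity of each $L_i$ in $u$, and the observation that boundedness of $\mathcal{U}$ renders the coercivity condition in (H3) vacuous. The only cosmetic difference is that the paper invokes Carath\'eodory's theorem for the feasible pair where you use Picard--Lindel\"of with $u\equiv 0$; your version is somewhat more explicit about the constants in (H1) and the invariant simplex $\Omega$, but the argument is the same.
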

\begin{proof}
Since we are considering a constant population, the solution of model \eqref{ControlledSIR} are bounded. In addition,  $F \in C^{1}$, and, therefore, (a) and (b) in (H1) are ensured by suitable bounds on the partial derivatives of $F$ and on $F(t, 0, 0)$. Moreover, the state equations are linear with respect to the controls $u$, and, thus, $F(t,x,u)=\phi(t,x)+\psi(t,x)u$. The existence of a feasible pair is guaranteed by the Caratheodory theorem \citep[pp.~182]{lukes1982} for Cauchy problems. Moreover, for bounded controls on a finite time interval, $\mathcal{U}$ is clearly closed and convex, and, hence, (H2) is satisfied as well.
The integrand $L_{i}(t,x,\cdot)$ of the cost functional is non-negative for $i=1,2,3,4$, quadratic for $i=1,2$ and linear for $i=3,4$ with respect to the controls. Therefore, $L(t,x,\cdot)$ is convex on $\mathcal{U}$. Furthermore, the set of admissible controls $\mathcal{U}$ is bounded (this also implies that the term $\vert u\vert$ in (H1) can be omitted), and, hence, (H3) hold vacuously, thus completing the proof.
\end{proof}

\section{Characterization of the optimal control}\label{AppendixOptimalitySystem}
Here, we characterize the optimal control by direct use of the maximum principle for the four cost functions \eqref{c1}--\eqref{c4}. Our aim is to obtain the so-called optimality system, which consist of the controlled \emph{SIR} model, the system \eqref{AdjointSystem} for the adjoint variables with transversality conditions \eqref{transversality}, and the characterization of the optimal control. The optimality system allow us to obtain a numerical approximation of the optimal control $u^{*}(t)$. 
\subsection{Quadratic state-independent cost}\label{QSIappendix}
Let us consider the quadratic state-independent cost function
\begin{equation*}
\sigma_{1}(u,I)=A_{2}u^{2}.
\end{equation*} 
The Hamiltonian is defined by the following equation:
\begin{equation}
\begin{aligned}
H_{1} &= A_{1}I + A_{2}u^{2}
  + \lambda_{S}\left( \mu N -\beta \dfrac{SI}{N}-\mu S\right)\\
  &+ \lambda_{I}\left(\beta \dfrac{SI}{N}-\gamma I -u(t)I -\mu I\right) 
  + \lambda_{R}\left(\gamma I + u(t) I -\mu R\right) ,
\end{aligned}
\end{equation}
The system for the adjoint variables is
\begin{equation}
\begin{aligned}
\dfrac{d\lambda_{S}}{dt}&=\lambda_{S}\left( \beta\dfrac{I^{*}}{N}+\mu\right) -\lambda_{I}\beta\dfrac{I^{*}}{N},\\
\dfrac{d\lambda_{I}}{dt}&=-A_{1}+ \lambda_{S}\beta\dfrac{S^{*}}{N}- \lambda_{I}\left(\beta\dfrac{S^{*}}{N}-\gamma-u^{*}-\mu\right) - \lambda_{R}(\gamma+u^{*}),\\ 
\dfrac{d\lambda_{R}}{dt}&=\mu \lambda_{R},
\end{aligned}
\end{equation}
with transversality conditions $\lambda_{S}(t_{f})=\lambda_{I}(t_{f})=\lambda_{R}(t_{f})=0$.\par 
The optimality condition for this case is
\begin{equation}
\dfrac{\partial H_{1}}{\partial u}=2A_{2}u-\lambda_{I}I^{*}+\lambda_{R}I^{*}=0.
\end{equation}\par 
From the maximum principle we obtain the following characterization for the optimal control:
\begin{equation}
u^{*}(t)=\min \left\lbrace 1, \max \left\lbrace 0, \dfrac{1}{2A_{2}} (\lambda_{I}-\lambda_{R})I^{*}\right\rbrace \right\rbrace.
\end{equation}
\subsection{Quadratic state-dependent cost}\label{QSDappendix}
Here, we consider the quadratic state-dependent cost function
\begin{equation*}
\sigma_{2}(u,I)=A_{2}u^{2}I.
\end{equation*} 
The Hamiltonian is given by the following equation:
\begin{equation}
\begin{aligned}
H_{2} &= A_{1}I + A_{2}u^{2}I
  + \lambda_{S}\left( \mu N -\beta \dfrac{SI}{N}-\mu S\right)\\
  &+ \lambda_{I}\left(\beta \dfrac{SI}{N}-\gamma I -u(t)I -\mu I\right) 
  + \lambda_{R}\left(\gamma I + u(t) I -\mu R\right) ,
\end{aligned}
\end{equation}
The system for the adjoint variables is
\begin{equation}
\begin{aligned}
\dfrac{d\lambda_{S}}{dt}&=\lambda_{S}\left( \beta\dfrac{I^{*}}{N}+\mu\right) -\lambda_{I}\beta\dfrac{I^{*}}{N},\\
\dfrac{d\lambda_{I}}{dt}&=-A_{1} -A_{2}u^{2} +\lambda_{S}\beta\dfrac{S^{*}}{N}- \lambda_{I}\left(\beta\dfrac{S^{*}}{N}-\gamma-u^{*}-\mu\right) - \lambda_{R}(\gamma+u^{*}),\\ 
\dfrac{d\lambda_{R}}{dt}&=\mu \lambda_{R},
\end{aligned}
\end{equation}
with transversality conditions $\lambda_{S}(t_{f})=\lambda_{I}(t_{f})=\lambda_{R}(t_{f})=0$.\par 
The optimality condition for this case is
\begin{equation}
\dfrac{\partial H_{2}}{\partial u}=2A_{2}uI^{*}-\lambda_{I}I^{*}+\lambda_{R}I^{*}=0.
\end{equation}\par 
From the maximum principle we obtain the following characterization for the optimal control:
\begin{equation}
u^{*}(t)=\min \left\lbrace 1, \max \left\lbrace 0, \dfrac{1}{2A_{2}} (\lambda_{I}-\lambda_{R}) \right\rbrace \right\rbrace.
\end{equation}
%


\end{appendices}

\end{document}